\documentclass[10pt,letterpaper,oneside]{amsart}
                      \usepackage[english]{babel}
\usepackage{titletoc}
\usepackage{geometry,amssymb,mathrsfs,hyperref,xcolor}
\usepackage{mathtools}
\usepackage{textcomp}
\usepackage{amsmath}
\usepackage{algorithm}
\usepackage{algpseudocode}
\usepackage{thmtools}
\usepackage{tikz}
\usepackage{tikz-cd}
\usepackage{graphicx}
\usepackage{comment}
\usepackage{enumitem}
\usepackage{colonequals}
\usepackage[backend=biber]{biblatex}
\newtheorem{theorem}{Theorem}[section]
{\theoremstyle{definition}\newtheorem{definition}[theorem]{Definition}}
\def\MR#1{}
{\theoremstyle{remark}}

\newtheorem{lemma}[theorem]{Lemma}

\newtheorem{conjecture}[theorem]{Conjecture}
{\theoremstyle{definition}
\newtheorem{myalgorithm}[theorem]{Algorithm}}
\newcommand{\acknowledgement}{\subsection*{Acknowledgements}}

\author{Antonio Rieser}
\date{}
\title{A New Construction of the Vietoris-Rips Complex}
\hypersetup{
 pdfauthor={Antonio Rieser},
 pdftitle={A New Construction of the Vietoris-Rips Complex},
 pdfkeywords={},
 pdfsubject={},
 pdfcreator={Emacs 27.1 (Org mode 9.6)}, 
 pdflang={English}}

\setcounter{biburllcpenalty}{7000}
\setcounter{biburlucpenalty}{8000}
\bibliography{/home/rieser/Bib/all}

\thanks{This work was partially supported by the
	CONACYT Investigadoras y Investigadores por M{\'e}xico Project \#1076,
	the grant N62909-19-1-2134 from the US Office of Naval Research Global and the	Southern Office of Aerospace Research and Development of the US Air Force
	Office of Scientific Research, the National Science Foundation Grant No. DMS-1929284 while the author was in residence at the Institute for Computational and Experimental Research in Mathematics in Providence, RI, during the "Math + Neuroscience: Strengthening the Interplay Between Theory and Mathematics" program, and the National Science Foundation Grant No. DMS-1928930 while the author was in residence at the Mathematical Sciences Research Institute (MSRI), now the Simons Laufer Mathematical Sciences Institute (SLMath).}

\begin{document}

\begin{abstract}
	We present a new, inductive construction of the Vietoris-Rips complex, 
	in which we take advantage of a small amount of unexploited 
	combinatorial structure in the $k$-skeleton of the complex in order to 
	avoid unnecessary comparisons when identifying its $(k+1)$-simplices. 
	In doing so, we achieve a significant reduction in the number of 
	comparisons required to construct the Vietoris-Rips compared to 
	state-of-the-art algorithms, which is seen here by examining the computational complexity of the critical step in the algorithms. In experiments comparing a C/C++ implementation of our algorithm to the GUDHI v3.9.0 software package \cite{gudhi:RipsComplex}, this results in an observed $5$-$10$-fold 
	improvement in speed of on sufficiently sparse Erd\H{o}s-Rényi graphs 
	with the best advantages as the graphs become sparser, as well as for higher 
	dimensional Vietoris-Rips complexes. We further clarify that the algorithm described in \cite{Boissonnat_Maria_2012} for the 
	construction of the Vietoris-Rips complex is exactly the Incremental 
	Algorithm from \cite{Zomorodian_2010}, albeit with the additional 
	requirement that the result be stored in a tree structure, and we 
	explain how these techniques are different from the algorithm 
	presented here.
\end{abstract}

\maketitle

\section{Introduction}
\label{sec:orgfff05a2}

The Vietoris-Rips complex of a point cloud, or, equivalently, the clique
complex of a graph, is the canonical tool for the calculating homology groups
associated to a point cloud, and has been since nearly the beginning of the
emergence of topological data analysis. Needless to say, using the best possible
algorithm for the construction of the complex is of paramount importance,
particularly since the construction of these complexes has a worst-case
complexity which is exponential in the number of points, and even the complexity of constructing the $k$-skeleton has a worst-case lower bound of $O(n^{(k+1)})$, where $n$ is the number of vertices in the complex. (See, for instance,
\cite{Lloyd_etal_2016} for a discussion of the complexity of computing homology from
a point cloud, in addition to a presentation of a quantum algorithm which provides an exponential improvement over the worst-case classical scenario.) Fortunately, however, the computation of clique
complexes for sparse graphs may be considerably faster in practice, particularly when one only
needs to compute a low-dimensional skeleton of the complex, as is the usual
practice in topological data analysis. Nonetheless, even with this restriction,
computing the persistent homology of large data sets tends to be rather
slow, limiting its practical application. While computing Euler characteristic curves \cites{Richardson_Werman_2014, Smith_Zavala_2021,Dlotko_Gurnari_arXiv_2022} and the Euler characteristic transform \cite{Amezquita_etal_2021} is
considerably faster than computing persistent homology, this, too, requires the initial construction of a filtered simplicial
complex when the original input data is in the form of a point cloud. It is
therefore essential to develop techniques and software designed to speed up the
construction of the simplicial complexes used in topological data analysis, with the ultimate goal of making
topological techniques more tractable for a wider range of data sets.

To the best of the author's knowledge, the algorithms described in \cite{Boissonnat_Maria_2012} and \cite{Zomorodian_2010}, and, in particular, the implementation of the Incremental-VR algorithm from \cite{Zomorodian_2010} in the GUDHI software package \cite{gudhi:RipsComplex}, represent the current state-of-the-art for the
construction of the Vietoris-Rips complex. In \cite{Zomorodian_2010}, Zomorodian
proposes three algorithms: first, an intuitive, inductive construction of the Vietoris-Rips complex (the Inductive-VR algorithm); second, the Incremental-VR algorithm, which is a refinement of the Inductive-VR algorithm designed to
avoid some repetitive calculations performed using its more naive predecessor; and, finally, a construction of the Vietoris-Rips complex which proceeds by finding the maximal cliques of a
graph, the Maximal-VR algorithm. He then studies their relative performance on a number of different data sets, and finds that, on most of the data sets, the Incremental-VR Algorithm indeed outperforms the others, although with the important caveat that, unlike the others, the Maximal-VR algorithm's performance may be improved through parallelization, even if its single-threaded
performance is significantly slower than the other two. In \cite{Boissonnat_Maria_2012}, Boissonnat and Maria introduce the simplex tree data structure, and present an algorithm that is equivalent to the Incremental-VR algorithm of \cite{Zomorodian_2010}, with the additional requirement that the simplicial complex be stored in a tree. While unreported in \cite{Boissonnat_Maria_2012}, the implementation of the Incremental-VR algorithm in GUDHI v3.9.0 uses a merge algorithm to compute the intersection in the critical step, which we briefly describe in Section \ref{sec:Background} below. One or another
of these algorithms have been implemented in almost every topological analysis
software package which has been written to date, a non-exhaustive list of which is given in \cites{Otter_etal_2017}.

In this article, we present a new algorithm for the construction of the
Vietoris-Rips complex in which the combinatorial structure of the complex is
exploited to avoid a large number of unnecessary calculations. We discuss the computational complexity of the critical step in constructing the $k$-skeleton from the $(k-1)$-skeleton when applied to Erd\H{o}s-Rényi graphs $G(n,p)$, showing that this step in the New-VR algorithm has complexity $O(np^{(k-1)})$, whereas the corresponding step in the best Incremental-VR algorithm as implemented in GUDHI v3.9.0 is expected to have an average complexity of least $O(pn)$, resulting in an overall advantage for the New-VR algorithm, particularly when $p$ is small and the dimension is large. We then present
several experiments where we compute the Vietoris-Rips complexes of
Erd\H{o}s-Rényi graphs using both the Incremental-VR algorithm and our New-VR algorithm, in which our algorithm performs approximately as expected: nearly ten times faster for sufficiently sparse Erd\H{o}s-Rényi graphs when compared to the Vietoris-Rips complex construction in GUDHI v3.9.0, with at least a $5$-fold improvement in most of the regimes which we tested. (The values of $p$ which we were able to take were unfortunately limited by memory constraints.) While the current implementations of our algorithm are slower than GUDHI v3.9.0 for sufficiently dense graphs in dimensions $2$ and $3$ and for complete graphs, we believe that the superior performance achieved by GUDHI v3.9.0 in these regimes to be the result of optimizations in other parts of the GUDHI v3.9.0 codebase (which is highly optimized relative to the experimental code to which it was compared) and not in the Vietoris-Rips construction itself. Indeed, this hypothesis supported by the observation that the GUDHI v3.9.0 code outperforms ours on complete graphs, when the number of comparisons performed by both algorithms should be the same. In dimensions higher than $3$, our software consistently outperforms GUDHI v3.9.0 in speed, with the exception of trials on complete graphs and those which were executed shortly before the program was killed for lack of memory. We discuss these issues further in Section \ref{sec:org4363633}. 

The algorithm we introduce depends on two
elementary observations: First, given two distinct \(n\)-cliques \(\sigma_0\)
and \(\sigma_1\) in a graph \(G\) which share a common \((n-1)\)-clique
\(\rho\), to determine whether \(\tau \coloneqq \sigma_0 \cup \sigma_1\) is itself an
\((n+1)\)-clique, we only need to verify the existence of a single additional
edge among the pairs of vertices in \(\tau\), since we know
that the rest of the edges are contained in one or the other of \(\sigma_0\) and $\sigma_1$. This edge,
furthermore, is easily identified to be the pair \(\{v_0,v_1\}\), where \(v_i =
\tau \backslash \sigma_i\) for \(i=0,1\). Second, every
\((n+1)\)-clique in a graph can be uniquely identified by the pair of
\(n\)-subcliques which is minimal among all such pairs with respect to the
lexicographic ordering on the pair and their vertices. (A precise definition of
this ordering, as well as the exact statements and proofs of the above assertions,
are given in the Sections \ref{sec:org4a1da2e} and \ref{sec:orgfb267b9}.) By taking advantage of the additional
structure provided by these observations, we are able reduce the
number of computations necessary to construct the clique complex, resulting in
an increase in the speed of the algorithm. Additionally, we mention how the Incremental-VR algorithm may be (embarrassingly) parallelized, and explain how our algorithm is similarly embarrassingly parallelizable. We naturally expect that parallelizing the implementation will lead to further improvements in speed.

\section{Combinatorial preliminaries}

\label{sec:org4a1da2e}

We now give the statements and proofs of the combinatorial facts
which underpin our construction of the Vietoris-Rips complex. The results are
entirely elementary, but we find their simplicity appealing, and we give their
proofs in full. We begin with the following lemma.

\begin{lemma}
\label{lem:Missing pair}
Let \(S\) be a set with \(|S| = n+1\) for some natural number \(n \geq 2\), and suppose that
the subsets \(S_0,S_1 \subset S\) satisfy \(|S_i| = n\) and that \(S_0 \neq
S_1\). Let \(t_i = S\backslash S_i\) for \(i=0,1\). Then the only pair \(\{a,b\} \subset S\)
which is not contained in one of the \(S_i\) is \(\{t_0,t_1\}\).
\end{lemma}

\begin{proof}
Without loss of generality, let \(S = \{ t_i \mid i\in \{0,\dots,n\}\}\). Then
\(S_0 = \{\hat{t}_0,t_1,t_2, \dots, t_n\}\), and \(S_0 =
\{t_0,\hat{t}_1,t_2,\dots,t_n\}\), where the notation \(\hat{t}_i\) indicates that the
element \(t_i\) is not in the set. Now note that every pair \(\{t_i,t_j\}, i,j\in
\{2,\dots\}\) is a subset of both \(S_0\) and \(S_1\). We also have that every
pair of the form \(\{t_0,t_i\}, i \in \{2,\dots,n\}\) is a subset of \(S_1\),
and every pair \(\{t_1,t_i\}, i \in \{2,\dots,n\}\) is a subset of \(S_0\). We
now see that the only pair contained in \(S\) but in neither of the \(S_i\) is
\(\{t_0,t_1\}\).
\end{proof}

\begin{theorem}
	\label{thm:Pair check}
Let \(G = (V,E)\) be an undirected graph, and let \(\Sigma_G\) denote the clique
complex of \(G\). Suppose, furthermore, that there exist \(k\)-simplices
\(\sigma_0,\sigma_1\), \(\sigma_0\neq\sigma_1\), which share the \((k-1)\)-face
\(\rho\). Let \(v_i = \sigma_i \backslash \rho\) for \(i=0,1\). If
\(\{v_0,v_1\} \in E\), then \(\sigma_0 \cup \sigma_1\) is a \((k+1)\)-simplex of
\(\Sigma_G\).
\end{theorem}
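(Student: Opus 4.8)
The plan is to apply Lemma~\ref{lem:Missing pair} directly to the vertex set of $\tau \coloneqq \sigma_0 \cup \sigma_1$, which reduces the statement to the single edge check that is precisely the hypothesis of the theorem.

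First I would set up the bookkeeping. Since $\rho$ is a $(k-1)$-face of the $k$-simplex $\sigma_i$, the set $v_i = \sigma_i \setminus \rho$ consists of a single vertex and $\sigma_i = \rho \cup \{v_i\}$. Because $\sigma_0 \neq \sigma_1$ while $\rho \subseteq \sigma_0 \cap \sigma_1$, we must have $v_0 \neq v_1$; hence $\tau = \rho \cup \{v_0,v_1\}$ has exactly $k+2$ vertices. Since $\Sigma_G$ is the clique complex of $G$, it therefore suffices to prove that $\tau$ is a clique of $G$, i.e.\ that every pair of distinct vertices of $\tau$ is an edge of $G$.

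Next I would invoke Lemma~\ref{lem:Missing pair} with $S = \tau$, so that $|S| = k+2$ and the parameter $n$ of the lemma is played by $k+1 \geq 2$ (here $k \geq 1$, since the $\sigma_i$ are distinct $k$-simplices sharing a $(k-1)$-face), and with $S_0 = \sigma_0$, $S_1 = \sigma_1$. A short computation identifies $t_i = S \setminus S_i = \tau \setminus \sigma_i = v_{1-i}$, so that the distinguished pair $\{t_0,t_1\}$ produced by the lemma is exactly $\{v_0,v_1\}$. The lemma then says that every pair of vertices of $\tau$ other than $\{v_0,v_1\}$ is contained in $\sigma_0$ or in $\sigma_1$.

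Finally I would conclude: any pair contained in $\sigma_0$ or in $\sigma_1$ is an edge of $G$, since each $\sigma_i$ is a clique, and the remaining pair $\{v_0,v_1\}$ is an edge of $G$ by hypothesis. Hence every pair of vertices of $\tau$ is an edge, so $\tau$ is a clique on $k+2$ vertices, i.e.\ a $(k+1)$-simplex of $\Sigma_G$. I do not expect a genuine obstacle here; the only points requiring a little care are the observation that $v_0 \neq v_1$ (so that $\tau$ indeed has $k+2$ vertices) and the alignment of the lemma's index $n$ with the theorem's $k$, together with the verification that $t_i = v_{1-i}$.
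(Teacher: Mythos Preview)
Your proof is correct and follows essentially the same approach as the paper, which also reduces the statement directly to Lemma~\ref{lem:Missing pair} with $S = \sigma_0 \cup \sigma_1$ and $S_i = \sigma_i$. You have simply filled in more of the bookkeeping (that $v_0 \neq v_1$, that $|\tau| = k+2$, and that $t_i = v_{1-i}$) than the paper's terse proof makes explicit.
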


\begin{proof}
Without loss of generality, let \(\tau \coloneqq \{v_2,\dots,v_k\}\), and by
hypothesis, we have that \(\rho = \sigma_0 \cap \sigma_1\). The theorem now
follows directly from \autoref{lem:Missing pair} with \(S = \sigma_0 \cup
\sigma_1\), and \(S_i = \sigma_i\).
\end{proof}

The above theorem gives us a criterion for determining when two \(k\)-simplices
form two faces of a \((k+1)\)-simplex, but since each \((k+1)\)-simplex has many
\(k\)-faces, if we apply the criterion naively, we risk constructing the same
\((k+1)\)-simplex many times over unnecessarily. We will therefore need a bookkeeping mechanism on the \(k\)-simplices which will allow us to track which \((k+1)\)-simplices we have already created, and which candidates we still need to check. To this end, we introduce
an order on pairs of simplices of the same dimension which will enable us
to identify a \((k+1)\)-simplex with its minimal pair of \(k\)-faces. We begin with the following definition.

\begin{definition}
Let \(\Sigma\) be a simplicial complex, and endow the vertices of \(\Sigma\)
with a total order. For every dimension \(k \geq 0\) of \(\Sigma\),
we give the set of \(k\)-simplices the lexicographic order on their ordered vertices, i.e. \(\sigma_0 < \sigma_1\) iff the ordered
list of vertices of \(\sigma_0\) is less than the ordered list of vertices of \(\sigma_1\)
in the lexicographic order. We then define the relation
\(<\) on pairs of distinct \(k\)-simplices \(\{\sigma_0,\sigma_1\}\), \(\{\tau_0,\tau_1 \}\),\(\sigma_0 \neq \sigma_1\),\(\tau_0 \neq \tau_1\)
in a similar way. That is, first, we order each pair so that, without loss of generality,
we may assume that \(\sigma_0 < \sigma_1\) and that \(\tau_0 < \tau_1\). We will
write such a pair \((\sigma_0 < \sigma_1)\). We then apply 
the lexicographic order to the set of such pairs, i.e.
\begin{equation*}
(\sigma_0 < \sigma_1) < (\tau_0 < \tau_1) \text{ iff } \begin{cases}
\sigma_0 < \tau_0, \text{ or }\\
\sigma_0 = \tau_0 \text{ and }\sigma_1 < \tau_1.
\end{cases}
\end{equation*} 
\end{definition}

We now have the following lemma.

\begin{lemma}
	\label{lem:Total order on pairs}
The relation \(<\) is a total order on the set of pairs of \(k\)-simplices for
each dimension \(k\geq 0\).
\end{lemma}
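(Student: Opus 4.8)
The plan is to reduce the lemma to the standard fact that the lexicographic order assembled from a family of strict total orders is again a strict total order, and to apply that fact twice: once to pass from the given total order on vertices to an order on $k$-simplices, and a second time to pass from the order on $k$-simplices to the order on pairs of $k$-simplices. Throughout, ``total order'' is understood in the strict sense, so what must be checked in each application is transitivity together with trichotomy (for any two elements $a,b$, exactly one of $a<b$, $a=b$, $b<a$ holds); irreflexivity is then automatic.

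First I would verify that the relation $<$ on the set of $k$-simplices is a strict total order. Fixing $k$, each $k$-simplex $\sigma$ is uniquely represented by the strictly increasing list $(\sigma^{(0)} < \cdots < \sigma^{(k)})$ of its vertices, and distinct $k$-simplices give distinct lists. For two such lists one runs the usual argument: if $\sigma \neq \tau$, let $j$ be the least index at which the lists disagree; totality of the vertex order forces exactly one of $\sigma^{(j)} < \tau^{(j)}$ or $\tau^{(j)} < \sigma^{(j)}$, which yields trichotomy, and transitivity follows by comparing first indices of disagreement in the standard way. In particular any two distinct $k$-simplices are comparable, which is exactly what makes the ``without loss of generality $\sigma_0 < \sigma_1$'' step of the preceding definition well-posed: a two-element set $\{\sigma_0,\sigma_1\}$ of $k$-simplices has a unique representative ordered pair $(\sigma_0 < \sigma_1)$. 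Under this identification the relation $<$ on pairs is precisely the lexicographic order on $2$-tuples of $k$-simplices, restricted to the strictly increasing ones, taken with respect to the total order on $k$-simplices just established; since restricting a total order to a subset preserves the total-order axioms, it suffices to run the same trichotomy-and-transitivity argument one more time on $2$-tuples, with the case split $\sigma_0 < \tau_0$ versus $\sigma_0 = \tau_0$ exactly as in the definition. This gives the claim.

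I do not expect a genuine obstacle here; the content is bookkeeping. The one point that genuinely requires care is the order of the argument: totality of $<$ on $k$-simplices must be proved \emph{before} the order on pairs is even defined, since otherwise the reduction of an unordered pair to its increasing representative has no meaning. A minor secondary point worth stating explicitly, rather than leaving implicit, is that passing to the subset of strictly increasing $2$-tuples does not disturb trichotomy or transitivity, so the restricted relation inherits the total-order property with no extra work.
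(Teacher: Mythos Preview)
Your proposal is correct and follows essentially the same approach as the paper: first use the total order on vertices to conclude that the lexicographic order on $k$-simplices is a total order, and then iterate to obtain the total order on pairs. The paper's proof is a two-sentence sketch of exactly this argument, so your version simply fills in the bookkeeping details (trichotomy, transitivity, and the well-definedness of the increasing representative of a pair) that the paper leaves implicit.
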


\begin{proof} We first recall that the vertices of \(\Sigma\) are totally ordered, which implies that the lexicographic order on the \(k\)-simplices is a total order. The result follows.
\end{proof}

The next theorem will be instrumental for our construction.

\begin{theorem}
	\label{thm:Main}
Let \(\Sigma\) be a simplicial complex, and for each \(k \geq 0\), let \(\Sigma_{k+1}\) denote the set of \((k+1)\)-simplices of \(\Sigma\). Furthermore, let
\(P_{k}\) be the set of distinct pairs \((\sigma_0 < \sigma_1)\), \(\sigma_0\neq \sigma_1\) of \(k\)-simplices of
\(\Sigma\) such that
\begin{enumerate}[label=(\alph*)]
\item \label{item:a} If \((\sigma_0<\sigma_1)\in P_{k}\), then \(\tau\coloneqq \sigma_0 \cup \sigma_1 \in \Sigma_{k+1}\).

\item \label{item:b} If a pair \((\sigma_0 < \sigma_1) \in P_{k}\), then \((\sigma_0 < \sigma_1)\) is
minimal among all pairs of \(k\)-faces of any common \((k+1)\)-dimensional coface \(\tau\). Equivalently,
\(\sigma_0\) and \(\sigma_1\) are the smallest and second- smallest \(k\)-faces of \(\tau\) in the lexicographic order on \(k\)-simplices of \(\Sigma\).
\end{enumerate}
Then
\begin{enumerate}
\item \label{item:1} For every \(k>1\), there is a bijection between \(P_{k}\) and \(\Sigma_{k+1}\).
\item \label{item:2} If \(k>1\), then for every pair \(( \sigma_0 < \sigma_1 ) \in P_{k}\), \(\sigma_0\) and \(\sigma_1\)
have a common \((k-1)\)-face \(\rho \in \Sigma\), and, furthermore \(\rho\) is the minimal \((k-1)\)-face of \(\tau\), and therefore also of
of both \(\sigma_0\) and \(\sigma_1\). 
\end{enumerate}
\end{theorem}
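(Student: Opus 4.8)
The plan is to reduce both parts of the theorem to a single combinatorial observation: for a fixed $(k+1)$-simplex, which of its $k$-faces are the smallest in the lexicographic order. So the first step I would carry out is to fix $\tau \in \Sigma_{k+1}$, write its vertices as $v_0 < v_1 < \cdots < v_{k+1}$, and for $0 \le i \le k+1$ set $\tau_i \coloneqq \tau \setminus \{v_i\}$, the $k$-face obtained by deleting $v_i$. I claim that $i < j$ implies $\tau_j < \tau_i$: the ordered vertex lists of $\tau_i$ and $\tau_j$ agree in positions $0,\dots,i-1$ (both read $v_0,\dots,v_{i-1}$), while in position $i$ the face $\tau_i$ reads $v_{i+1}$ whereas $\tau_j$ still reads $v_i$ (since $j>i$), and $v_i < v_{i+1}$. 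Hence $\tau_{k+1} < \tau_k < \cdots < \tau_0$, so the two smallest $k$-faces of $\tau$ are $\tau_{k+1} = \{v_0,\dots,v_k\}$ and $\tau_k = \{v_0,\dots,v_{k-1},v_{k+1}\}$. Running the analogous argument with two deleted vertices shows $\rho \coloneqq \tau \setminus \{v_k,v_{k+1}\} = \{v_0,\dots,v_{k-1}\}$ is the minimal $(k-1)$-face of $\tau$; since $\rho$ is also a face of both $\tau_{k+1}$ and $\tau_k$ and every face of those is a face of $\tau$, $\rho$ is likewise their minimal $(k-1)$-face.

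For part \ref{item:1}, I would consider the map $\Phi\colon P_k \to \Sigma_{k+1}$ sending $(\sigma_0 < \sigma_1)$ to $\sigma_0 \cup \sigma_1$, which is well defined by \ref{item:a}. For surjectivity, given $\tau \in \Sigma_{k+1}$ take $\sigma_0 = \tau_{k+1}$ and $\sigma_1 = \tau_k$; these are distinct $k$-faces of $\tau$, hence simplices of $\Sigma$, their union is $\tau$ (removing $v_{k+1}$ and removing $v_k$ from $\tau$ leaves vertex sets whose union is all of $\tau$), and by the first step they are the two smallest $k$-faces of $\tau$, so $(\sigma_0 < \sigma_1) \in P_k$ and $\Phi(\sigma_0 < \sigma_1) = \tau$. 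For injectivity, suppose $\Phi(\sigma_0 < \sigma_1) = \Phi(\sigma_0' < \sigma_1') = \tau$; then $\sigma_0,\sigma_1,\sigma_0',\sigma_1'$ all have $k+1$ vertices and lie inside the $(k+2)$-vertex simplex $\tau$, so each is a $k$-face of $\tau$, and condition \ref{item:b} forces both $\{\sigma_0,\sigma_1\}$ and $\{\sigma_0',\sigma_1'\}$ to be the set consisting of the two smallest $k$-faces of $\tau$; since the $k$-simplices are totally ordered, that set is unique, whence $\sigma_0 = \sigma_0'$ and $\sigma_1 = \sigma_1'$. This gives the bijection.

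For part \ref{item:2}, let $(\sigma_0 < \sigma_1) \in P_k$ with $\tau = \sigma_0 \cup \sigma_1 \in \Sigma_{k+1}$. Exactly as in the injectivity argument, \ref{item:b} pins down $\sigma_0 = \tau_{k+1}$ and $\sigma_1 = \tau_k$, so $\sigma_0 \cap \sigma_1 = \{v_0,\dots,v_{k-1}\} = \rho$, a $k$-element subset of the simplex $\tau$ and hence a $(k-1)$-simplex of $\Sigma$ that is a common $(k-1)$-face of $\sigma_0$ and $\sigma_1$. By the first step, $\rho$ is the minimal $(k-1)$-face of $\tau$ and of each of $\sigma_0,\sigma_1$, which is precisely what is claimed.

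I expect the only genuine content to be the face-ordering observation in the first step; the rest is bookkeeping with conditions \ref{item:a} and \ref{item:b}. The point to keep in mind is that \ref{item:b} is exactly what makes the pair of $k$-faces attached to a given $(k+1)$-simplex unique, so it must be invoked both in the injectivity half of part \ref{item:1} and at the start of part \ref{item:2}; and the hypothesis $k > 1$ serves only to keep $\rho$ an honest (nonempty) simplex of $\Sigma$, so that part \ref{item:2} is not vacuous.
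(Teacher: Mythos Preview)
Your proof is correct and follows essentially the same approach as the paper's: define the map $P_k \to \Sigma_{k+1}$ by $(\sigma_0<\sigma_1)\mapsto \sigma_0\cup\sigma_1$, use condition~\ref{item:a} for well-definedness, the existence of a minimal pair for surjectivity, and condition~\ref{item:b} together with the total order for injectivity; then for part~\ref{item:2} write out the ordered vertices of $\tau$ explicitly and read off $\sigma_0$, $\sigma_1$, and $\rho$. Your version is in fact more careful than the paper's, since you actually prove the face-ordering claim $\tau_{k+1}<\tau_k<\cdots<\tau_0$ that the paper simply asserts (``it follows that $\sigma_0$ and $\sigma_1$ must be of the form\dots''), and your indexing is cleaner.
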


\begin{proof}
First, by Item \ref{item:a}, there is a well-defined map \(\Psi:P_{k+1}\to \Sigma_{k+1}\). By Item \ref{item:b} and \autoref{lem:Total order on pairs}, \(\Psi\) is injective, and by \autoref{lem:Total order on pairs}, for every \(\tau \in \Sigma_{k+1}\), there exists a minimal pair \((\sigma_0 < \sigma_1)\) such that \(\tau = \sigma_0 \cup \sigma_1\), and therefore \(\Psi\) is surjective as well. We have therefore proven Item \ref{item:1}.

To see Item \ref{item:2}, note that for any pair \((\sigma_0 < \sigma_1)\in P_k\), \(\sigma_0\) and \(\sigma_1\) are \(k\)-faces of a common \((k+1)\)-dimensional simplex by hypothesis, and therefore they share a common \((k-1)\)-face \(\rho\). Since the pair \(\sigma_0 < \sigma_1\) is minimal among pairs of \(\tau = \sigma_0 \cup \sigma_1\), then writing \(\tau = (t_0,\dots,t_{k+1})\), where \(i<j \implies t_i < t_j\), it follows that \(\sigma_0\) and \(\sigma_1\) must be of the form \(\sigma_0 = \{t_0,\dots,t_{k-1}\}\) and \(\sigma_1 = \{t_0,\dots,t_{k-2},\hat{t}_{k-1},t_k\}\). Therefore the common face of \(\sigma_0\) and \(\sigma_1\) is \(\sigma_0\cap \sigma_1 = \{t_0,\dots,t_{k-2}\}\), which is the minimal \((k-1)\)-face of \(\tau\), as well as the minimal \((k-1)\)-face of both \(\sigma_0\) and \(\sigma_1\). 
\end{proof}
\section{The \texttt{New-VR} Algorithm}
\label{sec:orgfb267b9}

We now describe our algorithm, which builds the simplex tree of the simplicial complex \(\Sigma_G\), where the latter is
seen as a partial order on its vertex set. In this and the following, we abuse notation and refer to 
nodes in the simplex tree both by the simplices they represent and by the vertex label they are given.

\begin{myalgorithm}
\label{alg:Serial} (Serial) Initialization: Let
\(G=(V,E)\) be a graph. Put a total order on the vertices 
of the graph \(G\). For each vertex \(v \in G\), construct 
a node at level \(0\) of
the simplex tree, and identify the edges \(E_v\) for which \(v\) is the smaller
of its vertices. Create a node at level \(1\) in the simplex tree for each edge
\(e\) in \(E_v\), and connect the node in the simplex tree corresponding to
\(v\) to every node corresponding to an edge \(e \in E_v\).

Inductive step: Suppose that we have constructed a subset \(D\) of the simplex tree of \(\Sigma\) which has layers numbered \(0, \dots, k\) and such that
and element \(\rho\) of the \((k-1)\)-st layer of \(D\) is connected to and
element \(\sigma\) of the \((k)\)-th layer of \(D\) iff \(\rho\) is the minimal
\((k-1)\)-face of \(\sigma\). We construct the \((k+1)\)-st layer of \(D\) in
the following way: For each \(\rho\in D(k-1)\), let \(L_\rho\subset D(k)\) be
the list of elements in \(D(k)\) connected to \(\rho\) in \(D\), i.e. \(L_\rho\)
is the list of \(k\)-simplices \(\sigma \in \Sigma_G\) such that \(\rho\) is the
minimal face of \(\sigma\). For every ordered pair \((\sigma_0 < \sigma_1)\) of
simplices in \(L_\rho\), check whether the pair \(\{v_0,v_1\}\) forms an edge in
\(G\), where \(v_i \coloneqq \sigma_0\cup \sigma_1 \backslash \sigma_i\). If
\(\{v_0,v_1\}\) is an edge in \(G\), then we add the simplex \(\tau \coloneqq\)
to the \((k+1)\)-st layer of \(D\), and we connect \(\sigma_0\) to \(\tau\).

Note that, by construction, \(\sigma_0\) is the smallest \(k\)-face of the newly created simplex \(\tau\)
with respect to the lexicographic order.
\end{myalgorithm}

The serial algorithm may be summarized by the following four methods.
We begin with a preliminary function which finds the neighbors of a vertex 
$v$ in the graph $G$ whose label is larger than that of $v$.

\begin{algorithm}
	\caption{\texttt{Upper-Neighbors($G=(V,E),u\in V$)}}
	\begin{algorithmic}
		\State \Return {$\{v \in V \mid u < v, \{u,v\}\in E\}$}
	\end{algorithmic}	
\end{algorithm}

The next function, \texttt{Table-Lookup} is the function which creates the list of children $M$ of a vertex $v$ in the simplex tree from an ordered list $L$ of the siblings of $g$ and the (adjacency matrix of the) graph $G$.  Note that, in line \ref{line:stopping cond TL}, we may simply consider all elements in $N$, but including the condition that $w$ be also less than the largest neighbor of $v$ slightly reduces the speed of the implementation in practice, and similar conditions are included in the corresponding section of GUDHI v3.9.0. 

\begin{algorithm}
	\caption{\texttt{Table-Lookup}($G,N,L,v$)}
	\begin{algorithmic}[1]
		\State $M \gets \texttt{empty list}$
		\ForAll{\texttt{($w \leq \min\{\text{end}(N),L(v)\}$) }} \label{line:stopping cond TL}
		\If{$\{v,w\}\in E$}
		\State $M \gets M \cup \{w\}$
		\EndIf 
		\EndFor
		\State \Return M
	\end{algorithmic}
\end{algorithm}

 The function \texttt{New-Add-Cofaces} takes a simplicial complex $\Sigma$, assumed to be the clique complex of the subgraph of $G={V,E}$ spanned by the vertices $\{n,...,v+1\}$ its vertices, a simplex $\tau\in \Sigma$, an ordered list $N$ of vertices which is assumed to be the largest vertices of the simplices which have $\tau$ as a minimal $k$-face, and an table $L$ which contains the value of the largest neighbor of each vertex in the graph. Add-Cofaces then constructs a new simplicial complex $\Sigma'$ by adding to $\Sigma$  
the simplex of the form $\tau' = \tau \cup \{v\}$, for each $v \in N$, after which it constructs the new set $M\coloneqq \{w \in N \mid \{v,w\}\in E\}$ which will become the children of $v$ in the simplex tree. It then recursively calls itself on $\tau'$, $\Sigma'$, and the new set $M$.

\begin{algorithm}
	\caption{\texttt{New-Add-Cofaces($G=(V,E),d,\tau,N,\Sigma,L$)}}
	\begin{algorithmic}[1]
		\State $\Sigma \gets \Sigma \cup \{\tau\}$
		\If{$\dim (\tau)\geq d$}
		\State \Return
		\Else
		\ForAll{$v \in N$}
		\State $\sigma \gets \tau \cup \{v\}$
		\State $M \gets \texttt{Table-Lookup}(G,N,L,v)$ \label{line:Table-Lookup NAC}
		\State \texttt{New-Add-Cofaces($G,d,\sigma,M,\Sigma,L$)}
		\EndFor
		\EndIf
	\end{algorithmic}
\end{algorithm}

The \texttt{New-VR} algorithm now applies \texttt{New-Add-Cofaces} to each vertex $v$ of the graph $G$.

\begin{algorithm}
	\caption{\texttt{New-VR($G=(V,E),d)$}}
	\begin{algorithmic}
		\State $\Sigma \gets V \cup E$
		\ForAll {$u \in V$}
		\State $N \gets \texttt{Upper-Neighbors}(G,u)$
		\State \texttt{New-Add-Cofaces}($G,d,\{u\},N,\Sigma$)
		\EndFor
		\State \Return $\Sigma$
	\end{algorithmic}
\end{algorithm}

\begin{myalgorithm}
	\label{alg:Parallel}(Parallel)
	We parallelize the above algorithm by noting that the comparisons are all independent of one another, and once we have two simplices in \(D(k)\) which share a common \((k-1)\)-face, we may begin constructing \(D(k+1)\).
\end{myalgorithm}

\begin{theorem}
Given a graph \(G=(V,E)\), Algorithms \ref{alg:Serial} and \autoref{alg:Parallel} construct the clique-complex \(\Sigma_G\) of \(G\).
\end{theorem}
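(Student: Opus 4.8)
The plan is to establish the slightly stronger statement that Algorithm~\ref{alg:Serial} builds a simplex tree \(D\) for which, for every \(k\ge 0\), the nodes in layer \(k\) are in bijection with the \(k\)-simplices of \(\Sigma_G\) and, moreover, a node \(\rho\) in layer \(k-1\) is joined to a node \(\sigma\) in layer \(k\) exactly when \(\rho\) is the minimal \((k-1)\)-face of \(\sigma\) in the lexicographic order. Carrying this structural invariant alongside the bijection is essential: it is precisely what legitimizes reading off the children list \(L_\rho\) at the next stage, so it must be part of the induction hypothesis. I would prove the two claims together by induction on \(k\). For the base case (\(k\le 1\)), the initialization creates one layer-\(0\) node per vertex and, for each \(v\), one layer-\(1\) node per edge \(\{v,w\}\) with \(v<w\); since every edge has a unique smaller endpoint, it is produced exactly once and attached to the node of its minimal \(0\)-face, which gives both the bijection and the invariant in dimensions \(0\) and \(1\).

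For the inductive step, suppose the claim holds through layer \(k\) with \(k\ge 1\), and consider the construction of layer \(k+1\). Fix \(\rho\in D(k-1)\); by the invariant, the list \(L_\rho\subseteq D(k)\) is exactly the set of \(k\)-simplices of \(\Sigma_G\) whose minimal \((k-1)\)-face is \(\rho\). The algorithm tests each ordered pair \((\sigma_0<\sigma_1)\) with \(\sigma_0,\sigma_1\in L_\rho\) and creates \(\tau\coloneqq\sigma_0\cup\sigma_1\) exactly when \(\{v_0,v_1\}\in E\), where \(v_i=\tau\setminus\sigma_i\). Soundness is immediate from \autoref{thm:Pair check}: whenever that edge is present, \(\tau\) is a genuine \((k+1)\)-simplex of \(\Sigma_G\), and it has \(k+2\) vertices because \(\sigma_0\) and \(\sigma_1\) share a \((k-1)\)-face. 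For completeness and the ``created exactly once'' property, I would identify the pairs the algorithm adds with the set \(P_k\) of \autoref{thm:Main}. In one direction, if \(\tau\) is added from \((\sigma_0<\sigma_1)\) with \(\sigma_0,\sigma_1\in L_\rho\), then \(\rho=\sigma_0\cap\sigma_1\) is the minimal \((k-1)\)-face of both, so, writing \(\tau=\{t_0<\dots<t_{k+1}\}\), one gets \(\{\sigma_0,\sigma_1\}=\{\{t_0,\dots,t_{k-1},t_k\},\{t_0,\dots,t_{k-1},t_{k+1}\}\}\), the two lexicographically smallest \(k\)-faces of \(\tau\), whence \((\sigma_0<\sigma_1)\in P_k\). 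Conversely, for \((\sigma_0<\sigma_1)\in P_k\), Item~\ref{item:2} of \autoref{thm:Main} supplies a common minimal \((k-1)\)-face \(\rho\), so \(\sigma_0,\sigma_1\in L_\rho\) and the pair is inspected; and since \(\tau=\sigma_0\cup\sigma_1\) is a clique, every pair of its vertices, \(\{v_0,v_1\}\) in particular, is an edge, so the pair is indeed added. Because the minimal \((k-1)\)-face of a \(k\)-simplex is unique, each pair is inspected for at most one \(\rho\), hence no simplex is built twice; combined with the bijection \((\sigma_0<\sigma_1)\mapsto\sigma_0\cup\sigma_1\) from \(P_k\) onto \(\Sigma_{k+1}\) of Item~\ref{item:1} of \autoref{thm:Main} (for \(k\ge 2\); the triangle case \(k=1\) follows from the same computation), the newly created layer-\((k+1)\) nodes are in bijection with \(\Sigma_{k+1}\), each created once. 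Finally, the algorithm joins \(\tau\) to \(\sigma_0\), which, by the remark following Algorithm~\ref{alg:Serial} (equivalently, Item~\ref{item:2} of \autoref{thm:Main}), is the minimal \(k\)-face of \(\tau\); hence the structural invariant is restored at layer \(k+1\), and the induction goes through.

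It then remains to observe that the routines \texttt{Upper-Neighbors}, \texttt{Table-Lookup}, \texttt{New-Add-Cofaces} and \texttt{New-VR} constitute an implementation of Algorithm~\ref{alg:Serial}: the recursion in \texttt{New-Add-Cofaces} threads, as the current simplex \(\tau\) grows, the ordered set \(N\) of those vertices \(v\) for which \(\tau\cup\{v\}\) is a simplex with minimal face \(\tau\), which is precisely the reindexed form of the list \(L_\rho\) above, while \texttt{Table-Lookup} performs exactly the edge test \(\{v_0,v_1\}\in E\); so the same simplex tree is produced, layer by layer. For Algorithm~\ref{alg:Parallel} one needs only that the outcome of every edge test depends solely on \(G\) and not on the order in which the tests are scheduled, together with the fact that the pipeline dependence ``layer \(k\) before layer \(k+1\)'' is automatically respected, since a pair in \(L_\rho\) can be assembled only after both of its members have been created; thus the parallel schedule yields the same complex \(\Sigma_G\).

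I expect the main obstacle to be bookkeeping rather than anything substantive: one must verify carefully that looping over the ordered pairs inside each child list \(L_\rho\) enumerates \(P_k\) with neither omission nor repetition, and that the candidate set \(N\) carried through the recursion of \texttt{New-Add-Cofaces} really does coincide, at every depth, with the abstract list \(L_\rho\) furnished by the inductive invariant. Once these identifications are in place, \autoref{thm:Pair check} and \autoref{thm:Main} supply all of the real content.
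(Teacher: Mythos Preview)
Your proposal is correct and follows essentially the same approach as the paper: both reduce the parallel case to the serial one and then derive correctness of the serial algorithm from \autoref{thm:Pair check} (soundness of the edge test) and \autoref{thm:Main} (the bijection between \(P_k\) and \(\Sigma_{k+1}\), and the fact that the common face \(\rho\) is the minimal one). The paper's proof is a terse sketch that simply invokes those two theorems, whereas you make the induction and the structural invariant explicit and verify carefully that the pairs inspected and added coincide with \(P_k\); this is more detailed but not a different route.
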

\begin{proof}
First, since Algorithm \autoref{alg:Parallel} is simply the parallelization of Algorithm \autoref{alg:Serial}, it is enough to prove that Algorithm \autoref{alg:Serial} constructs the clique complex of \(G\). This, however, follows from Theorems \ref{thm:Pair check} and \ref{thm:Main}, and the fact that, at every layer \(D(k)\) of the simplex tree diagram, the criterion in \autoref{thm:Pair check} is checked for every pair of \(k\)-simplices \((\sigma_0 < \sigma_1)\) which share a common \((k-1)\)-face \(\rho\).
\end{proof}

\section{The Zomorodian Algorithms and the Simplex Tree}
\label{sec:Background}
In this section, we describe the algorithms for constructing the Vietoris-Rips complex from \cite{Zomorodian_2010} and \cite{Boissonnat_Maria_2012} and discuss the relationships between them. 
We first give the definition of the Vietoris-Rips complex.

\begin{definition}
	Let $G=\{V,E\}$ be an undirected graph with vertices $V$ and edge set $E$. The Vietoris-Rips (or clique) complex $\Sigma$ of $G$ is the abstract simplicial complex given by
	\begin{equation*}
		\sigma = \{v_0,\dots,v_n\} \in \Sigma \iff \forall v_i,v_j\in \sigma, i \neq j, \{v_i,v_j\} \in E.
	\end{equation*}
	Note that, by definition, $\sigma \in \Sigma, \sigma' \subset \sigma \implies \sigma'\in \Sigma$, and therefore $\Sigma$ is a simplicial
	complex.
\end{definition}

We now recall the Inductive-VR construction from \cite{Zomorodian_2010} (substituting upper neighbors for lower neighbors for consistency with the notation in \cite{Boissonnat_Maria_2012}). The Inductive-VR method does the following. Starting with the graph, it adds the $2$-dimensional cofaces of each edge, then the $3$-dimensional
cofaces of each $2$-d face, and so on.

\begin{algorithm}
	\caption{\texttt{Inductive-VR($G=(V,E), d$)}}
\begin{algorithmic}
	\State $\Sigma \gets V \cup E$
	\For{$i=1$ to $d$}
		\For{\texttt{each $k$-simplex $\tau \in \Sigma$}}
			\State $N \gets \bigcap_{v \in \tau}$ \texttt{Upper-Neighbors($G,v$)}
			\For{each $w \in N$}
				\State $\Sigma \gets \Sigma \cup \{\tau \cup \{w\}\}$
			\EndFor
		\EndFor
	\EndFor
	\State \Return $\Sigma$	
\end{algorithmic}
\end{algorithm}

The next method, the Iterative-VR construction from \cite{Zomorodian_2010}, rests on the following observation: Given a $k$-simplex $\tau' = \tau \cup \{v\}$, where $\tau$ and $\tau'$ are both in the Vietoris-Rips complex, the $(k+1)$-dimensional cofaces of $\tau'$ such that all of their vertices have labels greater than $v$ are exactly the simplices of the form $\tau' \cup \{w\}$, where $w \in \tau \cap \text{Upper-Neighbors}(v)$. (Compare with Figure 2 in \cite{Boissonnat_Maria_2012}). 

The \texttt{Add-Cofaces} function for the Incremental-VR method is identical to the \texttt{New-Add-Cofaces} except for at Line 7 below. Note that the sets $M$ is each case will be identical, but the methods to calculate them are different. This function takes a simplicial complex $\Sigma$, assumed to be the clique complex of the subgraph of $G={V,E}$ spanned by the vertices $\{n,...,v+1\}$ its vertices, a simplex $\tau\in \Sigma$, and a collection $N$ of vertices which is assumed to be the intersection of the upper neighbors of each vertex in $\tau$. Add-Cofaces then constructs a new simplicial complex $\Sigma'$ by adding to $\Sigma$ all of the simplices of the form $\tau' = \tau \cup \{v\}$, for each $v \in N$, after which it recursively calls itself on $\tau'$, $\Sigma'$, and $M = N \cap \text{Upper-Neighors}(v)$. This function is summarized here:

\begin{algorithm}
	\caption{\texttt{Add-Cofaces($G=(V,E),d,\tau,N,\Sigma$)}}\label{alg:AC}
	\begin{algorithmic}[1]
		\State $\Sigma \gets \Sigma \cup \{\tau\}$
		\If{$\dim (\tau)\geq d$}
		\State \Return
		\Else \ForAll{$v \in N$}
		\State $\sigma \gets \tau \cup \{v\}$
		\State $M \gets N \cap \texttt{Upper-Neighbors}(G,v)$ \label{line:Intersection AC}
		\State \texttt{Add-Cofaces($G,d,\sigma,M,\Sigma$)}
		\EndFor
		\EndIf
	\end{algorithmic}
\end{algorithm}

The Incremental-VR algorithm then results from applying Add-Cofaces to each vertex $v \in V$ in the graph, i.e.

\begin{algorithm}
	\caption{\texttt{Incremental-VR}}
	\begin{algorithmic}
		\State $\Sigma \gets \Sigma \cup \{\tau\}$
		\ForAll {$u \in V$}
		\State $N \gets \texttt{Upper-Neighbors}(G,u)$
		\State \texttt{Add-Cofaces}($G,d,\{u\},N,\Sigma$)
		\EndFor
		\State \Return $\Sigma$
	\end{algorithmic}
\end{algorithm}

Finally, the simplex tree data structure, introduced in \cites{Boissonnat_Maria_2012}, encodes a simplicial complex as an directed tree (i.e. a tree with directed edges) where the edges are directed from parent nodes to child nodes. Each node of a simplex tree has a label which is one of the vertices of the graph, and multiple nodes may (and usually do) have the same label. A simplex $\{v_0,v_1,v_2,...,v_k\}$, where $v_0 < v_1 < \cdots < v_k$ is represented in the tree as a (directed) path. That is, if $\{v_0, v_1,v_2\}$ is a simplex in a simplicial complex $\Sigma$ and $v_0 < v_1 < v_2$, then $v_0$ is inserted at level $0$ of the corresponding simplex tree, $v_1$ is its child, and $v_2$ is the child of $v_2$, etc. which, in particular, gives a bijection between $k$-dimensional simplices in a simplicial complex and the nodes in the $k$-th level of the simplex tree (we define the level with vertices as level zero).  The algorithm to create the Vietoris-Rips complex of a graph using a simplex tree data structure is exactly the same as the Incremental-VR algorithm above, except that one is more specific about the data structure used to encode the simplicial complex, and the set of vertices $M$ is inserted into the simplex tree as children of $v$ at each step in the \texttt{Add-Cofaces} function above.

In the implementation of the Incremental-VR algorithm in the software package GUDHI, the intersection of the two sets in Line \ref{line:Intersection AC} of Algorithm \ref{alg:AC} is performed using the following common \texttt{merge} algorithm. Let $L_1$ and $L_2$ be two ordered lists. The specific intersection algorithm implemented in GUDHI v3.9.0 is found in Algorithm \ref{alg:merge}:

\begin{algorithm}
	\caption{\texttt{GUDHI Merge-Intersect}}\label{alg:merge}
\begin{algorithmic}[1]
	\State \texttt{$I$ = empty list}
	\State $v_1 \gets \texttt{first element of } L_1$
	\State $v_2 \gets \texttt{first element of } L_2$
	\While {$((v_1 \leq \texttt{end}(L_1)) \texttt{ \&\& } (v_2 \leq \texttt{end}(L_2)))$}
		\If {$(v_1 == v_2)$} 
		\State	$I \gets I \cup \{v_1\}$
		\State	$v_1 \gets \texttt{the next element in } L_1$		\State	$v_2 \gets \texttt{the next element in } L_2$
		\ElsIf{$((v_1 \geq$ \texttt{end}$(L_2) || (v_2 \leq$ \texttt{begin}$(L_1)))$}\label{line:Stopping cond 1}
		\State	\Return $I$
		\ElsIf{$(v_1 < v_2)$}
		\State $v_1 \gets$ \texttt{the next element in }$L_1$
		\ElsIf{$(((v_1 \leq \texttt{end}(L_2)) || (v_2 \geq \texttt{begin}(L_1))))$} \label{line:Stopping cond 2}
		\State	\Return $I$;
		\Else
		\State	$v_2 \gets$ \texttt{the next element in }$L_2$
		\EndIf
	\EndWhile
	\State \Return $I$
\end{algorithmic}
\end{algorithm}

Although not mentioned in \cites{Boissonnat_Maria_2012, Zomorodian_2010}, as in \ref{alg:Parallel} above, the sub-complexes $\Sigma_i$ which result by applying \texttt{Add-Cofaces} to vertices $v_i$ are independent of each other, and they only require access to the graph in order to construct. The incremental algorithm is therefore (embarrassingly) parallelizable, at least assuming that the original graph is available to all of the parallel processes. Since the graph is fixed, this may be easily implemented in a number of ways.
 
\section{The Complexity Difference Between the Incremental-VR and New-VR Algorithms}

We now briefly discuss give a simple example which illustrates the difference in behavior between the Incremental-VR (GUDHI) and our New-VR algorithms. Consider the graph in Figure \ref{fig:Graph}. Its clique complex, has $10$ vertices, $4$ faces, and a single $3$-dimensional simplex.
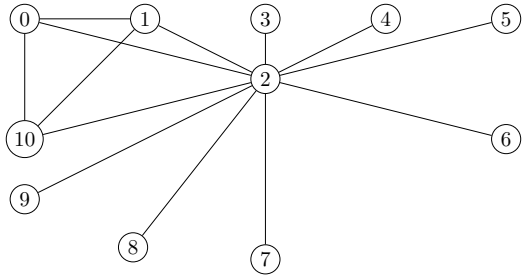
\begin{figure}[!ht]
	\centering
	\scalebox{0.8}{
\begin{tikzpicture}[every node/.style={draw,circle,inner sep=2pt}]
	\node (2) at (6,-1) {2};
	\node (0) at (2,0) {0};
	\node (1) at (4,0) {1};
	\node (3) at (6,0) {3};
	\node (4) at (8,0) {4};
	\node (5) at (10,0) {5};
	\node (6) at (10,-2) {6};
	\node (7) at (6,-4) {7};
	\node (8) at (3.8,-3.8) {8};
	\node (9) at (2,-3) {9};
	\node (10) at (2,-2) {10};

	\foreach \u/\v in {2/1,2/0,1/0,2/3,2/4,2/5,2/6,2/7,2/8,2/9,2/10,10/1,0/10}
	\draw (\u) -- (\v);
\end{tikzpicture}}
\caption{Graph which demonstrates the extra comparisons performed by \texttt{Add-Coface}. When adding node $10$ as a child to node $2$, the function \texttt{merge-intersect} will compare $10$ to all the nodes from $3$ to $10$. In \texttt{New-Add-Cofaces}, however, only a table lookup at the coordinates $(2,10)$ made, saving $7$ comparisons.}\label{fig:Graph}
\end{figure}
The computationally intensive part of both algorithms is the passage from one level to the next in the simplex tree. As we pass form level $2$ in the tree (the $2$-skeleton of simplicial complex) to level $3$ (the $3$-skeleton), in \texttt{Incremental-VR}, the algorithm computes the intersection $M \cap \text{Upper-Neighbors}(2)$ using \texttt{merge-intersect}. Since $M= \{10\}$ and \texttt{Upper-Neighbors}$(2)= \{3,\dots,10\}$, the Incremental-VR algorithm needlessly compares $\{2,3,\dots 8,9\}$ to $10$ before finally adding $v_10$ to the next level of the simplex tree. Conversely, in the New-VR algorithm, the only table look-up which is performed is for
the table entry at index $(2,10)$, resulting in faster performance.

We examine the relative computational complexity of the two algorithms by first noting that the only differences between them is that the function \texttt{GUDHI-Merge-Intersect} in line \ref{line:Intersection AC} of \texttt{Add-Cofaces} has been replaced by the function \texttt{Table-Lookup} in line \ref{line:Table-Lookup NAC} of \texttt{New-Add-Cofaces}, and so we focus on simply comparing these two functions. However, computing the relative average case computational complexity of both algorithms is somewhat complicated by lines \ref{line:Stopping cond 1} and \ref{line:Stopping cond 2} of the \texttt{GUDHI Merge-Intersect} function, which stops the comparisons by taking into consideration of the start and end points of the respective lists, and, similarly, including $L(v)$ in the stopping condition $w\leq \texttt{min}(\texttt{end}(N),L(v))$ in line \ref{line:stopping cond TL} of the \texttt{New-Add-Cofaces} function.  We will instead examine the complexity of simplified versions of these functions, which, while not exactly the implementations that were tested, are easier to analyze, and share the same basic behavior as seen in our experiments. For reference, the versions of these algorithms which we will consider in the following discussion are contained in Algorithms \ref{alg:Simp-Table-Lookup} and \ref{alg:simp-merge} below.

\begin{algorithm}
	\caption{\texttt{Simplified-Table-Lookup}($G,N,L,v$)}\label{alg:Simp-Table-Lookup}
	\begin{algorithmic}[1]
		\State $M \gets \texttt{empty list}$
		\ForAll{$w \in N$} \label{line:simple stopping cond TL}
		\If{$\{v,w\}\in E$}
		\State $M \gets M \cup \{w\}$
		\EndIf 
		\EndFor
		\State \Return M
	\end{algorithmic}
\end{algorithm}

\begin{algorithm}
	\caption{\texttt{Simplified-Merge-Intersect}}\label{alg:simp-merge}
	\begin{algorithmic}[1]
		\State \texttt{$I$ = empty list}
		\State $v_1 \gets \texttt{first element of } L_1$
		\State $v_2 \gets \texttt{first element of } L_2$
		\While {$((v_1 \leq \texttt{end}(L_1)) \texttt{ \&\& } (v_2 \leq \texttt{end}(L_2)))$}
		\If {$(v_1 == v_2)$} 
		\State	$I \gets I \cup \{v_1\}$
		\State	$v_1 \gets \texttt{the next element in } L_1$		\State	$v_2 \gets \texttt{the next element in } L_2$
		\ElsIf{$(v_1 < v_2)$}
		\State $v_1 \gets$ \texttt{the next element in }$L_1$
		\Else
		\State	$v_2 \gets$ \texttt{the next element in }$L_2$
		\EndIf
		\EndWhile
		\State \Return $I$
	\end{algorithmic}
\end{algorithm}

To examine the relative computational complexity of these two algorithms on Erd\H{o}s-Rényi graphs, consider a simplex $\sigma = \{v_0,\dots,v\} \in \Sigma$ be a simplex in the Vietoris-Rips complex of an Erd\H{o}s-Rényi graph $G(n,p)$. The complexity of \texttt{Simplified-Table-Lookup} is $O(S)$, where $S$ is the number of siblings of $v$, and therefore for the Erd\H{o}s-Rényi graph $G(n,p)$, the expected value of $S$ is $\mathbb{E}(S) = np^k$, where $k$ is the layer of the simplex tree in which $v$ resides, or, equivalently, the dimension of $\sigma$. Conversely, the expected value of the number of neighbors of any vertex is $np$, and the expected complexity of the \texttt{Simplified-Merge-Intersect} function is therefore $O(n(p + p^k)$. Furthermore, in the regime where $np_n = c$, $c$ a constant (preserving the expected degree of each vertex as $n$ increases), we have that $p = c/n$, and the critical step is better by an order of magnitude in \texttt{Simplified-Table-Lookup} versus \texttt{Simplified-Merge-Lookup}. (This does not imply that the entire algorithm is necessarily an order of magnitude better in this regime, however, only that it is better and that there is a marked difference in the performance of \texttt{Simplified-Table-Lookup} and \texttt{Simplified-Merge-Intersect}.)

 Since the number of operations in \texttt{Table-Lookup} function is at most two times that of the $\texttt{Simplified-Table-Lookup}$ function due to the additional verification, and may be smaller, the computational complexity of \texttt{Table-Lookup} is bounded above by $O(S)$. While we do not know the precise value of the computational complexity of \texttt{GUDHI-Merge-Intersect}, we conjecture that it is equal to that of \texttt{Simplified-Merge-Intersect}. In any event, while \texttt{Simplified-Merge-Intersect} may be replaced by a superior algorithm for the intersection of two lists, we are unaware of any such algorithm whose average complexity has been shown to be equal to or better in the size of the \emph{smaller} of the two lists $(i.e. \leq O(M))$), making the \texttt{New-VR} algorithm more efficient than any instance of the \texttt{Incremental-VR} algorithm using known methods for computing the intersection of two ordered lists. Furthermore, similar behavior may be expected in any Vietoris-Rips complex built from any random graph where the expected degree of a vertex is independent of the vertices, as the expected number of simplices with smallest vertex $v_0$ decreases exponentially with dimension, whereas the expected number of neighbors of any vertex $v_0$ is independent of dimension, making the \texttt{New-VR} algorithm superior in these cases.    
\section{Experiments}
\label{sec:org1b833da}

The \texttt{New-VR} algorithm and the experiments below were implemented in C/C++ using the clang and clang++ compilers, and run on an 2.4GHz Intel Xeon Silver 4214R CPU running Ubuntu Linux 20.04.06 LTS. The programs used the single-threaded
versions of the algorithms, and for the Incremental-VR algorithm, the implementation in GUDHI v3.9.0 was used.

For each combination of $p=\{0.1,\dots,0.5\}$ and $\text{Maximal Dimension}=\{2,\dots,5\}$, we generated $100$ Erd\H{o}s-Rényi graphs $G(n=125,p)$ and constructed their Vietoris-Rips complexes up to the maximal dimension for the experiment. (Note that both algorithms were tested against the same random graphs.) As one can see from Tables \ref{table:NewAlgMeans} and \ref{table:OldAlgMeans} below, the new algorithm studied in this article was, on average, $5$-$10$ times faster than the Incremental-VR algorithm, with higher increases for sparser graphs and in higher dimensions, as expected from the complexity analysis. We believe that the superior performance of the Incremental-VR algorithm seen in dimension two is principally due to better memory handling and other optimizations in the GUDHI codebase which are independent of the algorithm for computing the Vietoris-Rips complex itself.

\begin{table}[h!]
\begin{center}
\begin{tabular}{|c|c|c|c|c|c|}
	\hline
	\multicolumn{6}{|c|}{\textbf{Mean Run Times (Microseconds) for the \texttt{New-VR} Algorithm}}\\
	\hline
	$\mathbf{p}\backslash$\textbf{Dimension} & 2 & 3 & 4 & 5 & 6 \\
	\hline
	0.1 & 109.85 & 110.65 & 150.12 & 232.78 & 239.05 \\
	\hline
	0.2 & 409.04 & 396.52 & 452.65 & 623.84 & 781.28 \\
	\hline
	0.3 & 1,418.11 & 1,549.58 & 1,637.73 & 1,845.14 & 2,687.03 \\
	\hline
	0.4 & 4,521.96 & 6,620.21 & 7,327.05 & 8,195.19 & 9,705.48 \\
	\hline
	0.5 & 12,142.36 & 28,439.83 & 47,432.14 & 53,391.11 &  54,866.87 \\
	\hline
	0.6 & 27,855.32 & 115,005.09 & 322,749.59 & 543,911.66 & (No Data Available)\\
	\hline 
\end{tabular}
\end{center}
\caption{The mean run-times (in microseconds) for the Algorithm \ref{alg:Serial} above, tested on $100$ Erd\H{o}s-Rényi graphs $G(n=100,p)$, where the probability $p$ is given in the first column of the table. The maximal dimension of the complexes is given in the top row of the table.}	\label{table:NewAlgMeans}
\end{table}

\begin{table}[ht!]
\begin{center}
\begin{tabular}{|c|c|c|c|c|c|}
	\hline
	\multicolumn{6}{|c|}{\textbf{Mean Run Times for the Incremental-VR Algorithm (GUDHI v3.9.0)}}\\
	\hline
	$\mathbf{p}\backslash$\textbf{Dimension} & 2 & 3 & 4 & 5 & 6 \\
	\hline
	 0.1 & 1,081.24 & 1,142.45 & 1,213.76 & 1,280.47 & 1,345.54 \\
	 \hline
	 0.2& 2,853.89 & 3,892.48 & 4,369.18 & 4,693.18 & 4,662.37 \\
	 \hline
	 0.3 & 5,559.44 & 11,509.93 & 14,834.08 & 16,121.94 & 16,414.89 \\
	 \hline
	 0.4 & 9,230.05 & 31,173.14 & 56,338.33 & 68,455.99 & 71,643.62 \\
	 \hline
	0.5& 14,364.26 & 75,938.71 & 221,483.81 & 367,614.49 & 446,944.97 \\
	\hline
	0.6 & 21,133.60 & 716,949.53 & 829,931.88 & 2,259,241.63 & (No Data Available)\\ 
	\hline
\end{tabular}
\end{center}
	\caption{The mean run-times (in microseconds) for the Incremental-VR algorithm, tested on $100$ Erd\H{o}s-Rényi graphs $G(n=100,p)$, where the probability $p$ is given in the first column of the table. The maximal dimension of the complexes is given in the top row of the table.}
		\label{table:OldAlgMeans}
\end{table}

\section{Discussion and Future Work}
\label{sec:org4363633}

In this article, we presented a new method for constructing the Vietoris-Rips complex of a graph, exploiting a small amount of combinatorial structure of the complex to simplify building the $(k+1)$-skeleton of the complex from the $k$-dimensional skeleton. Since our algorithm constructs each \((k+1)\)-simplex in a clique complex using
the $k$-skeleton of the complex combined with a single additional verification at
only one pair of vertices, we conjecture that this represents the most efficient process possible, i.e. that
\begin{conjecture}
Algorithms \autoref{alg:Serial} and \autoref{alg:Parallel}
are optimal serial and parallel algorithms, respectively, for
constructing the $k$-skeleta of the clique complex of a graph.
\end{conjecture}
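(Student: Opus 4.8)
The plan is, first, to fix a precise meaning for ``optimal'', and I would take the defensible reading to be \emph{worst-case optimal up to a constant factor}: the worst-case complexity, over all $n$-vertex graphs, of constructing the $k$-skeleton of a clique complex is $\Theta(n^{k+1})$, and \autoref{alg:Serial} runs in $O(n^{k+1})$ on \emph{every} input. I would flag at the outset that the finer, instance-by-instance reading appears to be \emph{false}: a non-edge $\{v,w\}$ whose two endpoints are both joined to a large clique is re-examined by \texttt{Table-Lookup} once for every face of that clique of dimension $\le k-1$, even though a single lookup decides all of the simplices that $\{v,w\}$ controls, and one can exhibit a graph on which this re-examination inflates the running time by a polynomial factor; so a statement sharper than the worst-case one would require either restricting the class of competing algorithms or imposing hypotheses on $G$, such as the sparse random regime of Section \ref{sec:Background}.

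For the lower bound --- essentially the one already quoted in Section \ref{sec:orgfff05a2} --- note that the $k$-skeleton of $K_n$ has $\binom{n}{k+1}=\Theta(n^{k+1})$ simplices, each of which any correct algorithm must write to its output, giving $\Omega(n^{k+1})$. For the upper bound I would account for the work of \autoref{alg:Serial} line by line, its correctness being already established: the \texttt{Upper-Neighbors} pass is $O(n^2)$; each simplex is inserted once and the number of simplices of dimension $\le k$ is $O(n^{k+1})$; and for the \texttt{Table-Lookup} calls, when the algorithm builds dimension $j$ it invokes \texttt{Table-Lookup} from the children lists of the $(j-2)$-dimensional simplices, and at such a simplex $\rho$ this costs at most $\binom{|L_\rho|}{2}\le\binom{n}{2}$, so, since there are at most $\binom{n}{j-1}$ simplices of dimension $j-2$, building dimension $j$ costs $O(n^{j-1}\cdot n^2)=O(n^{j+1})$, and summing over $2\le j\le k$ gives $O(n^{k+1})$. (The one subtlety is the maximal level, where \texttt{New-Add-Cofaces} as written still computes an $M$ it never uses; with the evident guard skipping that computation --- which changes nothing in the output, and which the GUDHI \texttt{Add-Cofaces} likewise needs --- the estimate holds.) Thus \autoref{alg:Serial} runs in $O(n^{k+1})$, which together with the $\Omega(n^{k+1})$ lower bound makes it worst-case optimal.

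For \autoref{alg:Parallel} I would work in a PRAM-type model: it executes the same multiset of operations as \autoref{alg:Serial}, hence has total work $O(n^{k+1})=\Theta(\text{worst case})$ and is work-optimal, while its depth is $O(d)$, $d$ the maximal dimension, because within a level the checks are independent and the levels are processed in order. An $\Omega(d)$ depth lower bound, however, holds only for algorithms that proceed dimension by dimension --- one could instead test all $\binom{n}{k+2}$ candidate top simplices in $O(\log n)$ depth --- so ``optimal'' in the parallel setting, as in the serial one, must be read relative to the incremental paradigm. The main obstacle throughout is thus choosing the notion of optimality so that the statement is simultaneously true and nontrivial; once that is done, the combinatorial lemmas of Section \ref{sec:org4a1da2e} together with the correctness theorem make the actual estimates routine, and the only genuinely delicate point is the re-examination multiplicity, which is precisely what obstructs the instance-optimal version.
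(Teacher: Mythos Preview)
The statement you are proving is a \emph{conjecture} in the paper, not a theorem: immediately after stating it the author writes ``We leave the exact notion of optimality in this conjecture (worst-case complexity, average complexity for specific random graph models, best practical performance for `real' examples, etc.) open to interpretation,'' and offers no proof. There is therefore nothing in the paper to compare your argument against; what you have done is select one of the readings the author leaves open and establish it. Under that reading your argument is sound: the $\Omega(n^{k+1})$ output-size lower bound via $K_n$ is the one already invoked in Section~\ref{sec:orgfff05a2}, your charging of the \texttt{Table-Lookup} work at a $(j-2)$-simplex $\rho$ as $O(|L_\rho|^2)\le O(n^2)$ and summing over $\rho$ is correct, and you rightly note that the pseudocode as written wastes a level of lookups at the top dimension and needs a guard to actually meet $O(n^{k+1})$ (on $K_n$ the unguarded version does $\Theta(n^{k+2})$ work).

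Two points deserve emphasis. First, the worst-case reading you have chosen, while defensible, is too coarse to capture what the paper is really claiming: the very same accounting shows that \texttt{Incremental-VR} with \texttt{Merge-Intersect} (and the same guard) is also $O(n^{k+1})$ in the worst case, hence equally ``optimal'' in your sense, so this interpretation does not separate the new algorithm from the one it is meant to improve upon --- which is presumably why the author declined to commit to it. Second, your observation that the instance-optimal reading appears to fail, via a non-edge $\{v,w\}$ repeatedly re-examined across many faces of a common neighbouring clique, is the genuinely interesting direction here and is consistent with the paper's hedging; sharpening that into an explicit family of graphs with a quantified gap against a concrete competitor would say more about the conjecture than the worst-case bound, which is routine once the interpretation is fixed.
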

We leave the exact notion of optimality in this conjecture (worst-case
complexity, average complexity for specific random graph models, best practical
performance for 'real' examples, etc.) open to interpretation. We gave an initial comparison of the complexity of a simplified version of the algorithm to a simplified version of the \texttt{Incremental-VR} algorithm as implemented in GUDHI v3.9.0, established that the complexity of the simplified \texttt{New-VR} algorithm is an upper bound for the complexity of the original \texttt{New-VR} algorithm, and we conjecture that the complexity of both of the original algorithms is equal to that of their simplified counterparts.

Multi-threaded and GPU-accelerated implementations of the parallel
algorithm would be naturally expected to significantly accelerate the performance of the new algorithm, and these are currently being developed.

Finally, the experiments on the Erd\H{o}s-Rényi graphs show that the performance of the new algorithm in sparse regimes, even in high dimensions, is as good or better than the performance of the GUDHI v3.9.0 algorithm in dimension $2$ with different values of $p$. We are interested to know whether this improvement - or at least the parallelized versions - would be enough to make the calculation of higher-dimensional ($> 1$) persistent homology practical for interesting small- to medium-sized data sets.

\acknowledgement{We are grateful to Marc Glisse for pointing out the reference \cite{Boissonnat_Maria_2012}.}

\printbibliography
\end{document}